\def\BibTeX{{\rm B\kern-.05em{\sc i\kern-.025em b}\kern-.08em
    T\kern-.1667em\lower.7ex\hbox{E}\kern-.125emX}}
\newtheorem{thm}{Theorem}[section]
\newtheorem{cor}[thm]{Corollary}
\theoremstyle{definition}
\newtheorem{defn}[thm]{Definition}
\theoremstyle{remark}
\begin{document}
\title{Regularity of  viscous solutions for  a degenerate non-linear Cauchy problem} 

\author[1]{Eric Hernández Sastoque$^1$}
\address{$^1$Departamento de Matemáticas\\
	    Universidad de Magdalena\\
	    Santa Marta\\
	    Colombia}
\email{$^1$eric.hernandez@unimagdalena.edu.co}

\author{Christian Klingenberg$^2$}
\address{$^2$Department of Mathematics, W$\ddot{\text{a}}$rzburg University, Germany}
\email{$^2$klingenberg@mathematik.uni-wuerzburg.de}
\author{Leonardo Rendón$^3$}
\address{$^3$Departamento de  Matemáticas, Universidad Nacional de Colombia, Bogotá}
\email{$^3$lrendona@unal.eu.co}
\author{Juan C. Juajibioy$^4$}
\address{$^4$ Departamento de Ciencias Naturales y Exáctas, Fundación Universidad Autonoma  de Colombia, Bogotá}
\email{$^4$jcjuajibioyo@unal.edu.co}

\subjclass[2010]{35K65 }

\keywords{Viscosity solution, Hölder stimates, Hölder continuity.}

\date{\today }

\begin{abstract}
We consider  the  Cauchy problem  for a class of nonlinear  degenerate
parabolic  equation with  forcing. By using the  vanishing viscosity method
we  obtain generalized solutions. We  prove some regularity results
about this generalized  solutions.

\end{abstract}

\maketitle
\section{Introduction}\noindent
We consider the Cauchy problem for the following  nonlinear
degenerate parabolic  equation with forcing
\begin{align}
&u_{t} = u\Delta u - \gamma|\nabla u|^{2}+f(t,u), \ (x,t)\in  \mathds{R}^{N}\times \mathds{R}^{+},\label{eq:1}\\ 
&u(x,0)=u_{0}(x)\in C(\mathds{R}^{N})\cap L^{\infty}(\mathds{R}^{N}) \label{eq:2},
\end{align}
where $\gamma$ is a non-negative constant. Equation (\ref{eq:1}) arisen in severals applications of
biology and  phisycs, see \cite{poros}, \cite{poros2}. Equation (\ref{eq:1}) is
of degenerate parabolic  type:  parabolicity it is loss at points  where $u=0$, see \cite{poros}, \cite{benedeto} for
a most  datailed  description. In \cite{Ber90} a weak solution for  the homogeneous equation (\ref{eq:1}) is  constructed
by using  the  vanishing  viscosity method, this method was introduced  by Lions and Crandall \cite{lions},  when they
studied the  existence  of solutions to Hamilton-Jacobi  equations
\[
 u_t+H(x,t,u,Du)=0
\]
and consists in view the  equation
(\ref{eq:1}) as the limit  for $\epsilon\to 0$ of the equation
\begin{equation}\label{vanish1}
u_{t} = \epsilon \Delta u +u\Delta u - \gamma|\nabla u|^{2}+f(t,u), 
\end{equation}
where $\epsilon$ is a small positive number. The  reguarity of  the weak solutions for  the  homogeneous Cauchy problem 
(\ref{eq:1}),(\ref{eq:2}) was  studied  by the  author in \cite{Lu01}.In this paper we
extend the  above results  for  the inhomogeneous case, this extension is interesting, from  physical  viewpoint, since the
equation (\ref{eq:1}) is  related  with non-equilibrium  process in poros media  due
to  external  forces. We obtain the  following main theorem,

\begin{thm}
 If $\gamma\geq \sqrt{2N}-1$, $|\nabla (u_0^{1+\frac{\alpha}{2}})|\leq M$,
 where $M$ is  a positive constant such  as
 \[
  \alpha^2+(\gamma+1)\alpha+\frac{N}{2}\leq 0,
 \]
then the  viscosity solutions of the Cauchy problem (\ref{eq:1}), (\ref{eq:2})
satisfies
\begin{equation}\label{eq:7}
 |\nabla(u^{1+\frac{\alpha}{2}})|\leq M.
\end{equation}

\end{thm}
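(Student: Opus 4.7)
The plan is to implement the Bernstein technique on the viscous regularization (\ref{vanish1}): establish the gradient bound for the smooth approximants $u^\epsilon$ uniformly in $\epsilon$, and then pass to the limit. Concretely, introduce
\[
v^\epsilon := (u^\epsilon)^{1+\alpha/2}, \qquad w^\epsilon := |\nabla v^\epsilon|^2,
\]
so that the hypothesis reads $w^\epsilon(\cdot,0) \leq M^2$ and the target estimate becomes $w^\epsilon \leq M^2$ for all $t > 0$, uniformly in $\epsilon > 0$. The statement of the theorem then follows from passing $\epsilon \to 0^+$ along the vanishing-viscosity subsequence and invoking lower semicontinuity of the distributional gradient.

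First, I would derive the PDE satisfied by $v^\epsilon$. Writing $u^\epsilon = (v^\epsilon)^{2/(2+\alpha)}$ and applying the chain rule to (\ref{vanish1}) yields a quasilinear equation of the form
\[
v^\epsilon_t = (\epsilon + u^\epsilon)\,\Delta v^\epsilon + \frac{C_1(\alpha,\gamma)\,u^\epsilon + C_2(\alpha)\,\epsilon}{v^\epsilon}\,|\nabla v^\epsilon|^2 + \tfrac{2+\alpha}{2}\,(u^\epsilon)^{\alpha/2}\, f(t,u^\epsilon).
\]
Differentiating with respect to $x_k$, multiplying by $2 v^\epsilon_{x_k}$, and summing over $k$ produces an evolution
\[
w^\epsilon_t - (\epsilon+u^\epsilon)\Delta w^\epsilon + 2(\epsilon+u^\epsilon)|D^2 v^\epsilon|^2 \;=\; \mathcal{Q}(\alpha,\gamma;\, v^\epsilon,\nabla v^\epsilon, D^2 v^\epsilon) + \mathcal{F},
\]
where $\mathcal{Q}$ collects the quadratic terms arising from differentiating the coefficients and $\mathcal{F}$ encodes the forcing contribution.

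Second, I would run a maximum-principle argument on $w^\epsilon$. Suppose for contradiction that $w^\epsilon$ exceeds $M^2$ somewhere; after a standard localization on a large ball (to guarantee a genuine interior maximum) the conditions $w^\epsilon_t \geq 0$, $\nabla w^\epsilon = 0$, and $\Delta w^\epsilon \leq 0$ hold at the maximizer $(x_0,t_0)$. The orthogonality $\nabla v^\epsilon \cdot D^2 v^\epsilon = 0$ together with the Cauchy--Schwarz inequality $|D^2 v^\epsilon|^2 \geq (\Delta v^\epsilon)^2/N$ lets me replace the Hessian norm by $(\Delta v^\epsilon)^2/N$, and substituting the expression for $\Delta v^\epsilon$ taken from the PDE reduces the obstruction to the non-positivity of a polynomial in $\alpha$. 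Careful bookkeeping should show that its leading coefficient is a positive multiple of $\alpha^2 + (\gamma+1)\alpha + N/2$, which by hypothesis is $\leq 0$; this contradicts the strict inequality $w^\epsilon(x_0,t_0) > M^2$. Letting the localization radius tend to infinity yields the global bound, and the required condition $\gamma \geq \sqrt{2N}-1$ is precisely the discriminant condition ensuring that an admissible $\alpha$ exists.

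\textbf{Main obstacle.} The algebraic verification in the second step is the heart of the proof: the exponents arising from the $u^\epsilon \leftrightarrow v^\epsilon$ substitution, the dimensional inequality $(\Delta v)^2 \leq N|D^2 v|^2$, and the coefficients produced by the degenerate diffusion $u\Delta u$ and the damping $-\gamma|\nabla u|^2$ must combine to produce exactly the quadratic $\alpha^2 + (\gamma+1)\alpha + N/2$; any miscount of powers destroys the estimate. A secondary difficulty is the forcing $f(t,u)$, on which the statement imposes no explicit structural hypothesis: to close the argument one presumably needs a sign or monotonicity condition so that $\mathcal{F}$ does not increase $w^\epsilon$ at the maximum, or else a Gronwall-type closure accepting a possibly time-dependent bound in place of the clean constant $M$.
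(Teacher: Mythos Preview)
Your Bernstein/maximum-principle plan is exactly the paper's approach. Two implementation notes: rather than changing variables to $v=u^{1+\alpha/2}$, the paper keeps $u$ and works with the weighted quantity $z=u^\alpha\cdot\tfrac12|\nabla u|^2$ (your $w^\epsilon$ up to the constant $(1+\alpha/2)^{-2}/2$), applying the inequality $\sum u_{x_ix_j}^2\ge(\Delta u)^2/N$ to $D^2u$ rather than $D^2v$; the residual terms in the $z$-equation then form the quadratic $-\tfrac{u^{\alpha+1}}{N}(\Delta u)^2+2z\,\Delta u+2\alpha(\alpha+\gamma+1)u^{-\alpha-1}z^2$ in $\Delta u$, whose discriminant is a positive multiple of $\alpha^2+(\gamma+1)\alpha+N/2$, so the hypothesis drops out with no further bookkeeping. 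Your worry about the forcing is justified and resolved exactly as you guessed: the detailed statement (Theorem~3.1) adds the hypotheses $f\ge 0$ and $f_u\le 0$, which make the zeroth-order coefficient $2f_u+\alpha u^{-1}f(t,u)$ in the parabolic inequality for $z$ nonpositive (recall $\alpha<0$), and the maximum principle then gives $\sup z\le\sup z_0$.
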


\section{Preliminaries}
\begin{defn} A function   $u\in  L^{\infty}(\Omega)\bigcap L^{2}_{Loc}([0,+\infty);H^{1}_{Loc}(\mathds{R}^{N}))$, 
is called  a weak solution of  (\ref{eq:1}),(\ref{eq:2}) if it satisfies  the following conditions:
\begin{itemize}
 \item[(i)] $u(x,t)\geq 0$ , $a .e$ in $\Omega$.
 \item[(ii)] $u(x,t)$ satisfies the following relation
 \begin{equation} \label{DOS_3}
\int\limits_{{\mathds{R}^{N}}} u_{0}\psi(x,0)\,dx + \iint\limits_{{\Omega}}(u\psi_{t}-u\nabla u\cdot \nabla\psi-(1+\gamma)|\nabla u|^{2}\psi-f(t,u)\psi)\, dx \, dt = 0,
\end{equation}\noindent
for any $\psi\in C^{1,1}(\overline{\Omega})$ with compact support in $\overline{\Omega}$.
\end{itemize}
\end{defn}\noindent
For  the construction  of a weak solution to the Cauchy problem  (\ref{eq:1}),(\ref{eq:2}),
we use the viscosity method: we add  the   term  $\epsilon\Delta u$ in the  equation (\ref{eq:1}) and we consider  the following   Cauchy problem
\begin{align}
&u_t = u\Delta u - \gamma|\nabla u|^{2} + f(t,u)+\epsilon\Delta u,  \ u \in  \Omega \label{vanish:1},\\ 
&u(x,0) = u_{0}(x), x\in \mathbb{R}^{N}\label{vanish:2}
\end{align}
where $\gamma \geq 0$, the existence of  solutions is garanteed by the Maximum
principle and then  we investigate the convergence of the solutions when $\epsilon \to 0$, in  fact, we will show that
when $\epsilon \to 0$, $u^{\epsilon}$ converges  to  the weak solution of 
(\ref{eq:1}),(\ref{eq:2}), but to cost of the loss of the uniqueness.
\begin{defn} The weak solution for  the Cauchy  problem  (\ref{eq:1}),(\ref{eq:2}) constructed by the vanishing viscosity method is called viscosity solution.
\end{defn}
\section{Estimates of Hölder}\label{s_eh}\noindent
In this  section we begin by  collecting  some a priori estimates for the  function  $u$.
\begin{thm}
\noindent If  $\gamma\geq \sqrt{2N}-1$, the initial  data (\ref{eq:2}) satisifes  $|\nabla(u_{0}^{1+\frac{\alpha}{2}})|\leq M$, where $M$ is a positive constant,
$\alpha^{2}+(\gamma+1)\alpha+\frac{N}{2}\leq0$ and $f \in C^{1} (\mathds{R}^{+}\times\mathds{R})$ satisfies, $f\geq0$, $f_{u}\leq 0$ , then the viscosity solution $u(x,t)$ of Cauchy problem (\ref{eq:1}),(\ref{eq:2}) satisfies 
\[|\nabla(u^{1+\frac{\alpha}{2}})|\leq M, \  \text{in}  \ \overline{\Omega}.\]
\end{thm}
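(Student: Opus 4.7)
\medskip

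\noindent\textbf{Proof proposal.} The plan is to prove the bound for the viscous approximations $u^{\varepsilon}$ of (\ref{vanish:1})--(\ref{vanish:2}) by a maximum-principle argument applied to the auxiliary quantity
\[
P \;:=\; \bigl|\nabla\bigl((u^{\varepsilon})^{1+\alpha/2}\bigr)\bigr|^{2},
\]
and then to pass to the limit $\varepsilon\to 0$ using the definition of the viscosity solution. Since $u^{\varepsilon}$ is smooth and strictly positive (by the maximum principle applied to (\ref{vanish:1})), $P$ is a classical function and the computations below are legitimate.

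First I would rewrite the equation (\ref{vanish:1}) in terms of $v:=u^{1+\alpha/2}$ (dropping the superscript $\varepsilon$ for readability), obtaining an equation of the form $v_{t}=\varepsilon\Delta v + A(v)\Delta v + B(v)|\nabla v|^{2}+F(t,v)$ where the coefficients depend explicitly on $\alpha$ and $\gamma$. Then I would differentiate the equation, multiply by $\nabla v$, and assemble a parabolic equation for $P$. A short computation using $\Delta|\nabla v|^{2}=2|\nabla^{2}v|^{2}+2\nabla v\cdot\nabla(\Delta v)$ together with the pointwise Bochner inequality $|\nabla^{2}v|^{2}\ge (\Delta v)^{2}/N$ would produce an evolution inequality of the schematic form
\[
P_{t}\;\le\;(\varepsilon+cu)\Delta P \;+\; \Phi(\alpha,\gamma,N)\,\frac{P^{2}}{u^{\,?}} \;+\; \bigl(f_{u}\,+\,\tfrac{\alpha}{2}\tfrac{f}{u}\bigr)\,P,
\]
where $\Phi(\alpha,\gamma,N)$ is a quadratic expression in $\alpha$ whose sign is exactly controlled by the hypothesis $\alpha^{2}+(\gamma+1)\alpha+\tfrac{N}{2}\le 0$. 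The hypothesis $\gamma\ge \sqrt{2N}-1$ is precisely the discriminant condition that guarantees the existence of such an $\alpha<0$.

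At this point I would apply the maximum principle to $P$ on $\overline{\Omega}$. At any interior maximum we have $\nabla P=0$, $\Delta P\le 0$, and the algebraic condition $\alpha^{2}+(\gamma+1)\alpha+\tfrac{N}{2}\le 0$ forces the $P^{2}$-term to be nonpositive. The forcing contribution is handled by the sign hypotheses on $f$: since $f\ge 0$, $f_{u}\le 0$ and $\alpha<0$ (forced by the quadratic inequality), the term $f_{u}+\tfrac{\alpha}{2}\tfrac{f}{u}$ is nonpositive, so it cannot drive $P$ upward. Combined with the initial bound $P(\cdot,0)\le M^{2}$, a standard comparison argument yields $P(\cdot,t)\le M^{2}$ for the viscous approximants $u^{\varepsilon}$, uniformly in $\varepsilon$.

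The main obstacle I expect is the careful bookkeeping of the coefficients in the evolution equation for $P$: one must track several terms of the form $u^{k}|\nabla u|^{\ell}(\Delta u)^{m}$ and regroup them so that the Bochner term $(\Delta v)^{2}/N$ combines with the $|\nabla v|^{2}\Delta v$ cross term to exhibit the quadratic $\alpha^{2}+(\gamma+1)\alpha+\tfrac{N}{2}$. A minor secondary issue is that the argument is formally carried out on $\mathds{R}^{N}$, so one must justify the attainment of the maximum of $P$ either by a localization/cutoff on expanding balls and sending the radius to infinity, or by working with $P-\delta(|x|^{2}+t)$ for $\delta\downarrow 0$. Once the uniform bound $|\nabla((u^{\varepsilon})^{1+\alpha/2})|\le M$ is established, passing to the limit $\varepsilon\to 0$ and invoking Definition of viscosity solution transfers the inequality (\ref{eq:7}) to the limit $u$.
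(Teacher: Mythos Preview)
Your proposal is correct and follows essentially the same strategy as the paper: the quantity $P=|\nabla(u^{1+\alpha/2})|^{2}$ coincides (up to the constant $2(1+\tfrac{\alpha}{2})^{2}$) with the paper's auxiliary function $z=u^{\alpha}\cdot\tfrac12|\nabla u|^{2}$, and the paper derives the same parabolic inequality for $z$ by introducing the weight $g(u)=u^{\alpha}$ rather than by first changing to the variable $v=u^{1+\alpha/2}$. The Bochner inequality $\sum u_{x_ix_j}^{2}\ge\tfrac1N(\Delta u)^{2}$, the role of the quadratic condition $\alpha^{2}+(\gamma+1)\alpha+\tfrac{N}{2}\le0$ in killing the $z^{2}$/cross terms, the sign analysis $2f_{u}+\alpha u^{-1}f\le0$, and the concluding maximum-principle step are all the same.
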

\begin{proof}

Let 
\begin{equation} \label{TRES_1}
w=\frac{1}{2}\sum_{i=1}^N u^{2}_{x_{i}}.
\end{equation}
Deriving with  respect $t$ in   (\ref{TRES_1}) and  replacing  in (\ref{eq:1}) we have 

\begin{equation}\label{neweq-1}
w_{t}=\sum_{i=1}^N u_{x_{i}}\Bigg[u_{x_{i}}\Delta u + u\Bigg(\sum_{j=1}^N u_{x_{i}x_{j}x_{j}}\Bigg)-2\gamma w_{x_{i}} + f_{u}u_{x_{i}}\Bigg].  \nonumber
\end{equation}
By other hand 
\begin{eqnarray}
\Delta w & = & \frac{1}{2}\sum_{j=1}^N \Bigg(\sum_{i=1}^N u^{2}_{x_{i}}\Bigg)_{x_{j}x_{j}}  \nonumber \\ \nonumber \\
& = & \frac{1}{2}\Bigg[\sum^{N}_{j=1}(2u_{x_{1}}u_{x_{1}x_{j}})_{x_{j}}+\sum^{N}_{j=1}(2u_{x_{2}}u_{x_{2}x_{j}})_{x_{j}}+\dots+\sum^{N}_{j=1}(2u_{x_{N}}u_{x_{N}x_{j}})_{x_{j}}\Bigg]
\nonumber \\ \nonumber \\
\Delta w & = & \sum^{N}_{i,j=1}u^{2}_{x_{i}x_{j}}+\sum^{N}_{i,j=1}u_{x_{i}}u_{x_{i}x_{j}x_{j}}, \label{deltaw}
\end{eqnarray}
thereby, 
\begin{eqnarray}
 w_{t} & = & 2w\Delta u + u\Delta w - u\sum_{i,j=1}^N u^{2}_{x_{i}x_{j}}-2\gamma \sum_{i=1}^N u_{x_{i}}w_{x_{i}}+2f_{u}w. \label{TRES_2}
 \end{eqnarray}
Set,
\begin{equation}\label{TRES_3}
 z=g(u)w.
\end{equation}
Deriving  two  times with  respect  $x_i$ in (\ref{TRES_3})   we have
\begin{align}
 w_{x_{i}}&=(g^{-1})_{x_{i}}z + g^{-1}z_{x_{i}}\label{TRES_4} \\
 w_{x_{i}x_{i}}&=(g^{-1})_{x_{i}x_{i}}z+2(g^{-1})_{x_{i}}z_{x_{i}}+g^{-1}z_{x_{i}x_{i}}\label{TRES_5}.
\end{align}
From  equations (\ref{deltaw}),(\ref{TRES_4}), (\ref{TRES_5}) we have that,

\begin{equation}
 \Delta w=\sum^{N}_{i=1}w_{x_{i}x_{i}}=\sum^{N}_{i=1}\Big[(g^{-1})_{x_{i}x_{i}}z+2(g^{-1})_{x_{i}}z_{x_{i}}+g^{-1}z_{x_{i}x_{i}}\Big], \nonumber
\end{equation}
Deriving two  times with  respect $x_i$ in (\ref{TRES_3}) we have
\begin{align}
 (g^{-1}(u))_{x_{i}}&=-g^{-2}g^{'}u_{x_{i}} \\
(g^{-1}(u))_{x_{i}x_{i}} & = 
 \bigg(\frac{2g^{'2}-gg^{''}}{g_{4}}\bigg)gu^{2}_{x_{i}}-\frac{g^{'}}{g^{2}}u_{x_{i}x_{i}}, 
\end{align}
then,
\begin{eqnarray}
\Delta w & = & \bigg(\frac{2g^{'2}-gg^{''}}{g^{4}}\bigg)g\sum^{N}_{i=1}u^{2}_{x_{i}}z-\frac{g^{'}}{g^{2}}\sum^{N}_{i=1}u_{x_{i}x_{i}}z-2g^{-2}g^{'}\sum^{N}_{i=1}u_{x_{i}}z_{x_{i}}+g^{-1}\sum^{N}_{i=1}z_{x_{i}x_{i}}
\nonumber \\ \nonumber \\
& = & g^{-1}\sum^{N}_{i=1}z_{x_{i}x_{i}}-2g^{-2}g^{'}\sum^{N}_{i=1}u_{x_{i}}z_{x_{i}}+2\bigg(\frac{2g^{'2}-gg^{''}}{g^{4}}\bigg)gwz-\frac{g^{'}}{g^{2}}z\sum^{N}_{i=1}u_{x_{i}x_{i}} \nonumber \\ \nonumber \\
\Delta w & = & g^{-1}\Delta z -2g^{-2}g^{'}\sum^{N}_{i=1}u_{x_{i}}z_{x_{i}}+2\bigg(\frac{2g^{'2}-gg^{''}}{g^{4}}\bigg)z^{2}-\frac{g^{'}}{g^{2}}z\Delta u. \label{TRES_6} \end{eqnarray}
From (\ref{TRES_2}), (\ref{TRES_3}), (\ref{TRES_4}), (\ref{TRES_6}), we obtain


\begin{equation}\label{TRES_7}
 \begin{split}
   z_{t} = & u\Delta z - (2g^{-1}ug^{'}+2\gamma)\sum^{N}_{i=1}u_{x_{i}}z_{x_{i}}+(2f_{u}+g^{'}g^{-1}f(t,u))z \\
   &+ \left(\frac{4ug^{'2}}{g^{3}}-\frac{2ug^{''}}{g^{2}}+\frac{2\gamma g^{'}}{g^{2}}\right)z^{2}+2z\Delta u - ug(u)\sum^{N}_{i,j=1}u^{2}_{x_{i}x_{j}}.
 \end{split}
\end{equation}
By choosing  $g(u)=u^{\alpha}$, and since  
\begin{equation}\label{TRES_9}
 \sum^{N}_{i,j=1}u^{2}_{x_{i}x_{j}}\geq\frac{1}{N}(\Delta u)^{2},
\end{equation}
replacing $g$ in  (\ref{TRES_7}),(\ref{TRES_9}) we have
\begin{equation}\label{TRES_10}
  \begin{split}
   z_{t} \leq & u\Delta z - 2(\alpha + \gamma)\sum^{N}_{i=1}u_{x_{i}}z_{x_{i}}+(2f_{u}+\alpha u^{-1}f(t,u))z  \\
   &+ 2\alpha(\alpha+1+\gamma)u^{-\alpha-1}z^{2}+2z\Delta u - \frac{u^{\alpha+1}}{N}(\Delta u)^{2}.
  \end{split}
 \end{equation}
For $\gamma \geq \sqrt{2N}-1$, if $\alpha$ satisfies
\begin{equation}\label{TRES_11}
  \alpha^{2}+(\gamma+1)\alpha + \frac{N}{2}\leq 0,
\end{equation}
 where $\alpha^{2}+(\gamma+1)\alpha\leq-\frac{N}{2}$,
 then,
\begin{equation}\label{TRES_12}
  2\alpha(\alpha+\gamma+1)u^{-\alpha-1}z^{2}+2z\Delta u - \frac{u^{\alpha+1}}{N}(\Delta u)^{2} \leq 0.
\end{equation}
Therefore from (\ref{TRES_10}) and (\ref{TRES_12}) we have
\begin{equation}\label{TRES_13}
  z_{t} \leq u\Delta z - 2(\alpha + \gamma)\sum^{N}_{i=1}u_{x_{i}}z_{x_{i}}+(2f_{u}+\alpha u^{-1}f(t,u))z.
\end{equation}
By an application of the maximum principle  in (\ref{TRES_13}) we have 
\begin{equation}
  |z|_{\infty}\leq|z_{0}|_{\infty}. \nonumber
\end{equation}
Now, from (\ref{TRES_1}), (\ref{TRES_3}), with $g(u)=u^{\alpha}$, since  the initial  data (\ref{eq:2}) satisifes 
\[\newline|\nabla(u_{0}^{1+\frac{\alpha}{2}})|\leq M,\]
with $M$ a positive constant and $\alpha$ satisfies (\ref{TRES_11}), we have

\begin{eqnarray*}
 |\nabla(u^{1+\frac{\alpha}{2}})|^{2} & = & \left|\sum^{N}_{i=1}(u^{1+\frac{\alpha}{2}})_{x_{i}}e_{i}\right|^{2}  \\ \\
 & = & \sum^{N}_{i=1}\left[(u^{1+\frac{\alpha}{2}})_{x_{i}}\right]^{2} \\ \\
 & = & \sum^{N}_{i=1}\left[\left(1+\frac{\alpha}{2}\right)u^{\frac{\alpha}{2}}u_{x_{i}}\right]^{2} \\ \\
 & = & \left(1+\frac{\alpha}{2}\right)^{2}u^{\alpha}\sum^{N}_{i=1}u^{2}_{x_{i}} \\ \\
 & = & 2\left(1+\frac{\alpha}{2}\right)^{2}u^{\alpha}w \\ \\
 & = & 2\left(1+\frac{\alpha}{2}\right)^{2}z,
\end{eqnarray*}
 therefore
\begin{displaymath}
 |\nabla(u^{1+\frac{\alpha}{2}})| \leq M.
\end{displaymath}
\end{proof}
\section{Hölder Continuity of $u(x,t)$}\label{s_hc}\noindent
Now, using  Theorem 3.1, we have the following corollary about  the regularity of the viscosity solution $u(x,t)$ to the  Cauchy problem  (\ref{eq:1}),(\ref{eq:2}).
\begin{cor}
\noindent Let $f$ be a continuous fuctions such that
 \[|f(t,w)| \leq k|w|^{m},\]
where $w$ is a real value function and $m$, $k$ non-negative constants.  Under  conditions  of the Theorem 3.1 the viscosity
solution $u(x,t)$ of the Cauchy problem (\ref{eq:1}), (\ref{eq:2}) is   Lipschitz continuous with  respect to $x$ and locally Hölder  continuous with exponent $\frac{1}{2}$ with  respect to  $t$ in $\overline{\Omega}$.
\end{cor}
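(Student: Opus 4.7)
The strategy is to combine Theorem~3.1's spatial bound with a mollified averaging argument against the weak formulation to obtain a temporal modulus.

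First, I would extract a pointwise spatial Lipschitz bound for $u$. Set $\beta := 1 + \alpha/2$; the discriminant in (\ref{TRES_11}) forces $\alpha<0$, and one can choose $\alpha\in(-2,0)$ so that $\beta\in(0,1)$. Then Theorem~3.1 yields $|\nabla u^{\beta}|\leq M$ a.e. Since $1/\beta>1$, the mapping $s\mapsto s^{1/\beta}$ is $C^{1}$ on $[0,\|u\|_{\infty}^{\beta}]$ with derivative bounded by $(1/\beta)\|u\|_{\infty}^{1-\beta}$. Composing gives $|u(x,t)-u(y,t)|\leq L|x-y|$ with $L:=(M/\beta)\|u\|_{\infty}^{1-\beta}$, so $u$ is globally Lipschitz in $x$ and $|\nabla u|\leq L$ a.e.

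For Hölder $1/2$ in $t$, fix $x_{0}\in\mathds{R}^{N}$ and $0<t_{1}<t_{2}\leq T$. For $r>0$ to be chosen, take a smooth nonnegative mollifier $\psi_{r}(y)=r^{-N}\phi((y-x_{0})/r)$ with $\phi$ supported in the unit ball and $\int\phi=1$, so $\int|\nabla\psi_{r}|\,dy\leq C/r$. Let $\bar u(t):=\int\psi_{r}(y)\,u(y,t)\,dy$. The spatial Lipschitz bound gives $|\bar u(t_{i})-u(x_{0},t_{i})|\leq Lr$ for $i=1,2$. Testing the weak formulation (\ref{DOS_3}) against $\eta(t)\psi_{r}(x)$ with $\eta$ smoothly approximating $\chi_{[t_{1},t_{2}]}$ and passing to the limit yields
\begin{equation*}
\bar u(t_{2})-\bar u(t_{1}) = -\int_{t_{1}}^{t_{2}}\!\int\bigl(u\nabla u\cdot\nabla\psi_{r}+(1+\gamma)|\nabla u|^{2}\psi_{r}+f(t,u)\psi_{r}\bigr)\,dy\,dt.
\end{equation*}
Using $\|u\|_{\infty}\leq A$, $|\nabla u|\leq L$, $|f(t,u)|\leq kA^{m}$, and $\int|\nabla\psi_{r}|\leq C/r$, the three terms contribute at most $ALC/r$, $(1+\gamma)L^{2}$, and $kA^{m}$ respectively, hence
\begin{equation*}
|\bar u(t_{2})-\bar u(t_{1})|\leq\left(\frac{ALC}{r}+(1+\gamma)L^{2}+kA^{m}\right)(t_{2}-t_{1}).
\end{equation*}
The triangle inequality gives $|u(x_{0},t_{2})-u(x_{0},t_{1})|\leq 2Lr+C_{1}(t_{2}-t_{1})/r+C_{2}(t_{2}-t_{1})$; optimizing with $r=\sqrt{t_{2}-t_{1}}$ (and absorbing $(t_{2}-t_{1})\leq\sqrt{T}\sqrt{t_{2}-t_{1}}$ locally) yields $|u(x_{0},t_{2})-u(x_{0},t_{1})|\leq C\sqrt{t_{2}-t_{1}}$, the claimed local Hölder-$1/2$ bound.

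The main obstacle will be making the test-function argument rigorous for a mere viscosity solution, since $\chi_{[t_{1},t_{2}]}(t)\psi_{r}(x)$ is not $C^{1,1}$ and the smoothed approximation requires $t\mapsto u(\cdot,t)$ to be continuous into a suitable space such as $L^{1}_{\mathrm{loc}}$. The cleanest route is to perform the entire calculation on the smooth viscous approximants $u^{\epsilon}$ using the pointwise equation (\ref{vanish:1}), where the extra term $-\epsilon\int\nabla\psi_{r}\cdot\nabla u^{\epsilon}$ contributes at most $\epsilon LC/r$ and vanishes as $\epsilon\to 0$; both the Lipschitz constant and the temporal bound hold uniformly in $\epsilon$ by Theorem~3.1, and then one passes to the limit. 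The case $t_{1}=0$ is handled separately by continuity of the initial data.
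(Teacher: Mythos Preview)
Your spatial Lipschitz argument is essentially the paper's: both extract $|\nabla u|\leq M_{1}$ from Theorem~3.1 via the chain rule and the boundedness of $u$. One small caveat: your composition step needs $\beta=1+\alpha/2\in(0,1)$, i.e.\ $\alpha\in(-2,0)$, but $\alpha$ is part of the hypotheses rather than freely chosen, and for some admissible $(\gamma,N)$ the whole interval $[\alpha_{-},\alpha_{+}]$ may lie to the left of $-2$. The paper sidesteps this by writing $|\nabla u|\leq |1+\tfrac{\alpha}{2}|^{-1}u^{-\alpha/2}M$ directly, which only requires $\alpha<0$ and $\alpha\neq -2$.

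For the temporal H\"older estimate the two proofs genuinely diverge. The paper works with classical approximants $u_{\epsilon}$ (solutions of (\ref{eq:1}) with lifted data $u_{0}+\epsilon$, hence uniformly parabolic), rewrites the equation as $u_{\epsilon}\Delta z - z_{t}=\gamma|\nabla u_{\epsilon}|^{2}-f(t,u_{\epsilon})$ with bounded coefficients and right-hand side, and then invokes Gilding's lemma \cite{Gil76} as a black box to obtain $|u_{\epsilon}(x,t)-u_{\epsilon}(x,t_{0})|\leq K|t-t_{0}|^{1/2}$ with $K$ independent of $\epsilon$, finally letting $\epsilon\searrow 0$. Your mollified-average argument is instead a direct, self-contained proof of the same interpolation principle: test against a spatial bump at scale $r$, bound the flux and source terms using $|\nabla u|\leq L$, $\|u\|_{\infty}\leq A$ and $|f|\leq kA^{m}$, then balance $2Lr$ against $C_{1}(t_{2}-t_{1})/r$ by taking $r=\sqrt{t_{2}-t_{1}}$. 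This is more elementary and exposes the mechanism (spatial Lipschitz plus $L^{\infty}$ control of the equation $\Rightarrow$ $C^{1/2}_{t}$), whereas the paper's route is shorter but leans on an external reference. Your closing remark---carry out the computation on smooth approximants with $\epsilon$-uniform bounds and pass to the limit---is exactly the structural step the paper takes as well, so the two proofs agree at that level.
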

\begin{proof}
From Theorem 3.1  there exists  $\alpha \in \mathds{R}$ with $\alpha^{2}+(\gamma+1)\alpha+\frac{N}{2}\leq0$, with  $\alpha<0$, or,
\begin{displaymath}
 -\frac{\sqrt{(\gamma+1)^{2}-2N}}{2}-\frac{\gamma+1}{2}\leq\alpha\leq-\frac{\gamma+1}{2}+\frac{\sqrt{(\gamma+1)^{2}-2N}}{2}<0.
\end{displaymath}
 Since $\alpha<0$,  taking $\alpha\neq-2$, we have  the  estimate,
\begin{eqnarray*}
 |\nabla(u^{1+\frac{\alpha}{2}})| & = & \Big|(1+\frac{\alpha}{2})u^{\frac{\alpha}{2}}\nabla u\Big| \\ \\
 & = & \Big|1+\frac{\alpha}{2}\Big|u^{\frac{\alpha}{2}}|\nabla u|\leq M.
\end{eqnarray*}
Now, as $u\geq0$, we have that
\begin{equation}\label{TRES_14}
  |\nabla u|\leq\Big|1+\frac{\alpha}{2}\Big|^{-1}u^{-\frac{\alpha}{2}}M\leq M_{1} \textrm{ in }\overline{\Omega},
\end{equation}
since $u$ is bounded.\\
Using  the value mean theorem  we have
\begin{equation}\label{TRES_15}
 u(x_{1},t)-u(x_{2},t)= \nabla u(x_{1}+\theta(x_{2}-x_{1}),t)\cdot(x_{1}-x_{2}),
\end{equation}
for any $\theta \in (0,1)$.
From (\ref{TRES_14}), (\ref{TRES_15}) we have,
\begin{eqnarray*}
 |u(x_{1},t)-u(x_{2},t)| & \leq & |\nabla u(x_{1}+\theta(x_{2}-x_{1}),t)||x_{1}-x_{2}| \\ \\
 & \leq & M_{1}|x_{1}-x_{2}|,\qquad\forall(x_{1},t),(x_{2},t)\in\Omega.
\end{eqnarray*}
Therefore  $u(x,t)$ is a Lipschitz continuous with respect to the spatial variable.\\[0.2in]
For  Hölder continuity of $u(x,t)$ with respect to the temporary variable, we are going to use the ideas  developed  in \cite{Gil76}.
Let  $u_{\epsilon}(x,t) \in C^{2.1}(\Omega)\bigcap C(\overline{\Omega})\bigcap L^{\infty}(\Omega)$  the classical solution 
to the Cauchy problem problem (\ref{eq:1}), (\ref{eq:2}), namely,
\begin{displaymath}
 \begin{cases} u_{t}  =  u\Delta u - \gamma |\nabla u|^{2} + f(t,u) & \text{ in $\Omega$} \\ u(x,0) =  u_{0}(x) + \epsilon & \text{on $\mathds{R}^{N},$}\end{cases}
\end{displaymath}
We have that
\begin{eqnarray*}
 \Bigg|\nabla(u_{0}+\epsilon)^{1+\frac{\alpha}{2}}\Bigg| & = & \Bigg|\Big(1+\frac{\alpha}{2}\Big)(u_{0}+\epsilon)^{\frac{\alpha}{2}}\nabla u_{0}\Bigg| \\ \\
 & \leq & \Big|1+\frac{\alpha}{2}\Big|(u_{0})^{\frac{\alpha}{2}}|\nabla u_{0}| \\ \\
 & = & \Bigg|\nabla \Big(u_{0}^{1+\frac{\alpha}{2}}\Big)\Bigg|. \\ \\
 & \leq & M,
\end{eqnarray*}
Then, the  conditions  of Theorem 3.1  holds. 
Thereby
\begin{displaymath}
 \Big|\nabla (u_{0}+\epsilon)^{1+\frac{\alpha}{2}}\Big|\leq M.
\end{displaymath}
Since $u_{\epsilon}$ is a classical solution, $u$ is also  a weak solution of the Cauchy problem (\ref{vanish:1}), (\ref{vanish:2}). Hence, using the  same  arguments in the  proof of Theorem 3.1,  we have that $u_{\epsilon}$ is a Lipschitz continuous with respect to the spatial variable, with constant $M$, namely 
\begin{equation}\label{TRES_16}
 |u_{\epsilon}(x_{1},t)-u_{\epsilon}(x_{2},t)| \, \leq \,M |x_{1}-x_{2}| \,\,\,\, \forall \,\, (x_{1},t), (x_{2},t) \in \Omega.
\end{equation}
\bigskip
\noindent Now, let $z=u_{\epsilon}$ be, then we have,

\begin{displaymath}
 z_{t}=u_{\epsilon_{t}}= u_{\epsilon}\Delta u_{\epsilon} - \gamma |\nabla u_{\epsilon}|^{2}+f(t,u_{\epsilon})
\end{displaymath}

\bigskip
\noindent or,

\begin{equation}\label{TRES_17}
 u_{\epsilon}\Delta z - z_{t} = \gamma |\nabla u_{\epsilon}|^{2} - f(t,u_{\epsilon}) \textrm{ in } \Omega.
\end{equation}

\bigskip
\noindent Using (\ref{TRES_17}) we have  that for all $T>0$,$R>0$, $z$ satisfies the  equation

\begin{equation}\label{TRES_18}
 u_{\epsilon}\Delta z - z_{t} = \gamma |\nabla u_{\epsilon}|^{2} - f(t,u_{\epsilon}) \textrm{ in } B_{2R}(0)\times (0,T],
\end{equation}

\bigskip
\noindent where $B_{2R}(0)$ is the open ball  centered in 0, with  radius 2$R$ in $\mathds{R}^{N}$. Noticing that $u_{\epsilon}\in C^{2.1}\Big(B_{2R}(0))\times (0,T]\Big)$.

\bigskip
\noindent Now, since $u_{\epsilon}$ and $\nabla u_{\epsilon}$ are  bounded in $\overline{B_{2R}(0)}\times (0,T]$, there exists a constant  $\mu>0$, such that

\begin{displaymath}
 \sum_{i=1}^{N}u_{\epsilon}(x,t)=N u_{\epsilon}(x,t)\leq\mu,
\end{displaymath}

\begin{displaymath}
 \gamma |\nabla u_{\epsilon}(x,t)|\leq\mu,\qquad\forall(x,t)\in B_{2R}(0)\times (0,T],
\end{displaymath}

\bigskip
\noindent and

\begin{displaymath}
 f(t,u_{\epsilon})\leq \mu.
\end{displaymath}

\bigskip
\noindent From (\ref{TRES_16}), we have also

\begin{displaymath}
 |z(x_{1},t)-z(x_{2},t)| \leq M |x_{1}-x_{2}| \qquad\forall(x,t)\in B_{2R}(0))\times (0,T].
\end{displaymath}
\noindent In acording with  \cite{Gil76},  there exists a positive constant $\delta$ (which  depends only of $\mu$ and $R$)
and a positive constant $K$, which  depends only of $\mu$, $R$ and $M$, such that

\begin{displaymath}
 |z(x,t)-z(x,t_{0})| \leq K |t-t_{0}|^{\frac{1}{2}},
\end{displaymath}

\bigskip
\noindent for all $(x,t), (x,t_{0})\in B_{R}(0)\times (0,T]$ with $|t-t_{0}|<\delta$.

\bigskip
\noindent That  is,

\begin{displaymath}
 |u_{\epsilon}(x,t)-u_{\epsilon}(x,t_{0})| \leq K |t-t_{0}|^{\frac{1}{2}},
\end{displaymath}

\bigskip
\noindent for all $(x,t), (x,t_{0}) \in B_{R}(0)\times (0,T]$ with $|t-t_{0}|<\delta$.

\bigskip
\noindent Whenever  $K$ is independent of $\epsilon$, taken  $\epsilon\searrow0$, we obtain

\begin{displaymath}
 |u(x,t)-u(x,t_{0})| \leq K |t-t_{0}|^{\frac{1}{2}},
\end{displaymath}

\bigskip
\noindent for all $(x,t), (x,t_{0}) \in B_{R}(0)\times (0,T]$ with $|t-t_{0}|<\delta$.
\nocite{Ber90, Eva98, Fri64, Fri78, Kes89, Lad68, Lu99, Lu00, Lu01, Qia99, Pro84}
\end{proof}
\bibliographystyle{amsplain}
\bibliography{bibliografia}

\providecommand{\bysame}{\leavevmode\hbox to3em{\hrulefill}\thinspace}
\providecommand{\MR}{\relax\ifhmode\unskip\space\fi MR }
\providecommand{\MRhref}[2]{%
  \href{http://www.ams.org/mathscinet-getitem?mr=#1}{#2}
}
\providecommand{\href}[2]{#2}
\begin{thebibliography}{10}

\bibitem{benedeto}
Emmanuele DiBenedetto, \emph{Degenerate parabolic equations}, Springer-Verlag,
  New York, Heidelberg, Berlin, 1993.

\bibitem{Eva98}
Lawrence~C. Evans, \emph{Partial differential equations}, American Mathematical
  Society, Graduate Studies In Mathematics. Rhode Island,, 1998.

\bibitem{Fri64}
Avner Friedman, \emph{Partial differential equations of parabolic type},
  Englewood Cliffs, N.J., Prentice-Hall Inc, 1964.

\bibitem{Fri78}
John Fritz, \emph{Differential equations}, Springer-Verlag, New York,
  Heidelberg, Berlin, 1978.

\bibitem{Gil76}
B.H. Gilding, \emph{Hölder continuity of solutions of parabolic equations}, J.
  Landon Math. Soc. 13, 103-106, 1976.

\bibitem{Kes89}
S.~Kesavan, \emph{Topics in functional analysis and applications}, John Wiley
  \& Sons. New York, 1989.

\bibitem{Lad68}
O.A. Ladysenskaya, V.A. Solonnikov, and Ural´ceva N.N, \emph{Linear and
  quasilinear equations of parabolic type}, Amer.Math.Soc. Transl, 1968.

\bibitem{Lu99}
Yun-Guang Lu, \emph{Hölder estimates of solutions to some doubly nonlinear
  degenerate parabolic equations}, Comm. Partial Differential Equations 24, no.
  5-6, 895--913.5656, 1999.

\bibitem{Lu01}
Yun~Guang Lu and Liwen Qian, \emph{Regularity of viscosity solutions of a
  degenerate parabolic equation}, American Mathematical Society, volume 130,
  number 4. Pages 999-1004, 2001.

\bibitem{lions}
Pierre-Louis~Lions Michael G.~Crandall, \emph{Viscosity solutions of
  hamilton-jacobi equations}, Transactions of the American Mathematical Society
  (1983).

\bibitem{Ber90}
Maura~Ughi Michiel~Bertsch, Roberta Dal~Passo, \emph{Discontinuous viscosity
  solutions of a degenerate parabolic equation}, Trans Amer. Math. Soc. 320,
  no. 2, 779-798, 1990.

\bibitem{poros2}
A.~Mikelic M.S~Espedal, A.~Fasano, \emph{Filtration in porous media and
  industrial applications}, Springer-Verlag, New York, Heidelberg, Berlin,
  2000.

\bibitem{Pro84}
Murray~H. Protter and Hans~F.W. Weinberger, \emph{Maximum principles in
  differential equations}, Springer-Verlag, New York, Heidelberg, Berlin, 1984.

\bibitem{Qia99}
Liwen Quian and Wentao Fan, \emph{Hölder estimate of solutions of some
  degenerate parabolic equations}, Acta Math. Sci. (English Ed.) 19, no. 4,
  463--468, 1999.

\bibitem{poros}
Juan~Luis Vazquéz, \emph{The porous medium equation, mathematical theory},
  Oxford Science Publications, 2007.

\end{thebibliography}

\end{document}